\newtheorem{lemma}{Lemma}[section]
\newtheorem{proposition}[lemma]{Proposition}
\newtheorem{theorem}[lemma]{Theorem}
\newtheorem{definition}[lemma]{Definition}
\renewcommand*\env@matrix[1][*\c@MaxMatrixCols c]{%
  \hskip -\arraycolsep
  \let\@ifnextchar\new@ifnextchar
  \array{#1}}
\begin{document}

\title{A hyperplane restriction theorem for holomorphic mappings and its application for the gap conjecture}

\author{Yun Gao\footnote{School of Mathematical Sciences, Shanghai Jiaotong University, Shanghai, People's Republic of China. \textbf{Email:}~gaoyunmath@sjtu.edu.cn}, Sui-Chung Ng\footnote{School of Mathematical Sciences, Shanghai Key Laboratory of PMMP, East China Normal University, Shanghai, People's Republic of China. \textbf{Email:}~scng@math.ecnu.edu.cn}}

\maketitle

\begin{abstract}
We established a hyperplane restriction theorem for the local holomorphic mappings between projective spaces, which is inspired by the corresponding theorem of Green for $\mathcal O_{\mathbb P^n}(d)$. Our theorem allows us to give the first proof for the existence of gaps (albeit smaller) at all levels for the rational proper maps between complex unit balls, conjectured by Huang-Ji-Yin. In addition, our proof does not distinguish  the unit balls from other generalized balls and thus it simultaneously demonstrates the same phenomenon for all generalized balls.
\end{abstract}


\section{Introduction}

The structure of the set of rational proper maps between complex unit balls is a very classical topic in Several Complex Variables. Among the many unsolved problems in this topic, there is the well-known \textit{gap phenomenon}, which will be recalled now. Fix an integer $n\geq 2$. For each $k\in\mathbb N^+$ such that $k(k+1)/2<n$, define the closed interval $\mathcal I_k:=[kn+1,(k+1)n-\frac{k(k+1)}{2}-1].$ The classical theorem of Faran~\cite{faran} amounts to saying that when $N\in\mathcal I_1=[n+1,2n-2]$, any local holomorphic map sending an open piece of $\partial\mathbb B^n$ to $\partial\mathbb B^N$ actually maps $\partial\mathbb B^n$ to a linear section $\partial\mathbb B^n\subset\partial\mathbb B^N$. In other words, there are no ``new'' maps when $N$ increases from $n$ to $2n-2$.
Then, it was discovered by Huang-Ji-Xu~\cite{hjx} that the same phenomenon holds for $N\in\mathcal I_2=[2n+1,3n-4]$ and later by Huang-Ji-Yin~\cite{HJY} for $N\in\mathcal I_3=[3n+1,4n-7]$. The \textit{Gap Conjecture}, formulated in~\cite{HJY2}, states that the gap phenomenon holds whenever $N\in\mathcal I_k$. 

In this article we are going to establish the existence of similar gaps for all levels \textit{at once} and also to demonstrate the gap phenomenon actually holds for \textit{all} generalized balls (whose definition will be recalled below).

\begin{theorem}\label{intro thm 2}
Let $k,n\in\mathbb N^+$ such that $n>k(k+1)$. For the local proper holomorphic maps between generalized balls, the gap phenomenon holds over the intervals
$$
\mathcal J_k:=[kn+k, (k+1)n-(k^2+1)].
$$
\end{theorem}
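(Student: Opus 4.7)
Given a local proper holomorphic map $F$ between generalized balls $B \subset \mathbb P^n$ and $B' \subset \mathbb P^N$, a standard extension argument (available in the generalized-ball setting) realizes $F$ as a rational map $\tilde F = [F_0 : \cdots : F_N]$ whose components $F_j$ are homogeneous of a common degree $d$. Set $V := \mathrm{span}\{F_0, \ldots, F_N\} \subset H^0(\mathbb P^n, \mathcal O(d))$. The content of Theorem~\ref{intro thm 2} is equivalent to showing that when $N \in \mathcal J_k$ one must have $\dim V \le kn + 1$, so that after a linear change of coordinates on the target $F$ factors through a generalized ball inside $\mathbb P^{kn}$. This is the form in which the gap phenomenon becomes attackable by algebraic means, and the hypothesis $n > k(k+1)$ of the theorem is precisely what keeps $\mathcal J_k$ nonempty.

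\textbf{Hyperplane restriction and induction.} The plan is to combine this reduction with the paper's hyperplane restriction theorem (modelled on Green's theorem for $\mathcal O_{\mathbb P^n}(d)$) and an induction on the pair $(n,k)$. Pick a generic hyperplane $H \cong \mathbb P^{n-1}$ in $\mathbb P^n$; the slice $H \cap B$ is again a generalized ball (in $\mathbb P^{n-1}$, with appropriately shifted Hermitian signature), so $F|_H$ is a local proper holomorphic map between lower-dimensional generalized balls with linear span $V|_H \subset H^0(H, \mathcal O(d))$. The hyperplane restriction theorem supplies an explicit lower bound for $\dim V|_H$ in terms of $\dim V$, $n$, and $d$; the intervals $\mathcal J_k$ are calibrated so that this bound places $\dim V|_H - 1$ into the gap interval available for the restricted problem (either at the same level $k$ in dimension $n-1$, or at level $k-1$). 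The inductive hypothesis then produces a linearly degenerate $F|_H$, and genericity in $H$, together with the constancy of $\dim V|_H$ on a Zariski-open set of hyperplanes, lifts this degeneracy back to $F$ itself.

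\textbf{Base case and main obstacle.} The base of the induction is $k=1$, where $\mathcal J_1 = [n+1, 2n-2]$ and the statement reduces to Faran's theorem, whose geometric proof carries over to generalized balls. The principal obstacle, and presumably the very reason $\mathcal J_k$ has to be strictly smaller than the conjectural $\mathcal I_k$, is the cumulative slack in the Green-type restriction inequality: upon iterating $k-1$ times, one loses roughly $k-1$ on the lower endpoint and $k(k-1)/2$ on the upper endpoint, which matches exactly the shortfall $\mathcal I_k \setminus \mathcal J_k$. Beyond this arithmetic bookkeeping, the secondary technical points I would need to handle are a uniform degree bound for the rational extension valid for generalized balls (analogous to the $d \le k$ bound in the ordinary ball case), careful control of the signature of $H \cap B$ so that the inductive hypothesis genuinely applies at the correct level, and verification that degeneracy of $F|_H$ on a Zariski-dense family of hyperplanes forces degeneracy of $F$ itself via a standard semicontinuity and gluing argument.
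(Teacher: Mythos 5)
The paper's proof of Theorem~\ref{intro thm 2} (via Theorem~\ref{gap thm} and Theorem~\ref{gap thm 2}) rests on \emph{two} complementary ingredients: (1) Theorem~\ref{hyperplane thm}, applied repeatedly to descend from $\mathbb P^n$ down to linear subspaces of the two ``middle'' dimensions $n_1=\lfloor(n-1)/2\rfloor$ and $n_2=n-1-n_1$, giving \emph{lower} bounds $D_{n_1}, D_{n_2}$ for the dimensions of the linear spans of the images (Proposition~\ref{dim prop}); and (2) the fact that a local proper map between generalized balls is a local orthogonal map (Proposition~\ref{equiv2}), so that the spans of the images of the two orthogonal complementary middle-dimensional subspaces must themselves be orthogonal in $\mathbb P^{r',s'}$, yielding the \emph{upper} bound $D_{n_1}+D_{n_2}\leq N-1$ (Proposition~\ref{contradict}). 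The contradiction arises precisely from the tension between the lower and upper bounds when $N\in\mathcal J_k$. Your proposal uses only ingredient (1) and never invokes orthogonality, so it has no source of an upper bound; without it, the hyperplane restriction theorem alone cannot force degeneracy (witness the degree-$d$ Veronese map, whose image is not contained in any hyperplane).

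You try to replace ingredient (2) by an induction on $(n,k)$ via restriction to a generic hyperplane section, but this has several concrete problems. First, the numerology does not match: taking $N=(k+1)n-(k^2+1)$, the upper endpoint of $\mathcal J_k$ in dimension $n$, one computes $N^{-<n>}=(k+1)(n-1)-(k^2+1)+1$, which is one past the upper endpoint of $\mathcal J_k$ in dimension $n-1$, so the hyperplane restriction bound does \emph{not} land $\dim V|_H-1$ in a gap interval for the restricted problem. Second, even granting the inductive hypothesis that $F|_H$ is degenerate, it is not explained how degeneracy of the restriction to a Zariski-dense family of hyperplanes lifts to degeneracy of $F$ itself; the hyperplane restriction theorem only goes in one direction, and the contrapositive alone does not give the required quantitative control. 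Third, the proposed base case $k=1$ is asserted to be ``Faran's theorem, whose geometric proof carries over to generalized balls,'' but Faran's proof relies on second-fundamental-form analysis in the positive-definite setting and does not carry over to indefinite signatures; the paper instead deduces the $k=1$ case as part of the same uniform orthogonality argument, which is part of what makes the theorem new in the generalized-ball setting. The paper's argument never inducts on $n$; it iterates the restriction inequality all the way down within a fixed $n$, and the gap interval $\mathcal J_k$ is calibrated so that the orthogonality constraint, not an inductive hypothesis, closes the argument.
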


For Theorem~\ref{intro thm 2}, what we are going to prove is that when the target dimension is within $\mathcal J_k$, then the image must lie in a hyperplane (Theorem~\ref{gap thm}). Thus, in the case of ordinary unit balls, it can be directly interpreted as there are no new proper holomorphic maps within the gaps, as described above. However, we will see that formulating precisely the gap phenomenon for all generalized balls as ``no new maps within the gaps" needs a bit more work and this will be done in Section~\ref{proper map section}. There the theorem above will be stated more precisely as Theorem~\ref{gap thm 2}.
Note that although the interval $\mathcal J_k$ in our theorem is smaller than the $\mathcal I_k$ in the original Gap Conjecture, this is to be expected since our theorem holds for \textit{all generalized balls}. As a matter of fact, the lower bound for $\mathcal J_k$ is sharp in the present context, as will be demonstrated after Theorem~\ref{gap thm 2}. On the other hand, we do not know at this point whether the upper bound for $\mathcal J_k$ is sharp for generalized balls.

Our proof for Theorem~\ref{intro thm 2} consists of two main ingredients: the orthogonality preserved by the relevant proper maps; and a hyperplane restriction theorem for holomorphic mappings. Regarding the study of orthogonality, it originated from an earlier work~\cite{GN}  of the authors. There we proposed a coordinate free approach to the rigidity problems related to real hyperquadrics on the projective space and generalized a number of well-known rigidity theorems by using rather simple arguments. The reader is referred to~\cite{GN} for the detail of this approach using orthogonality, although we will briefly recall the basics wherever needed in this article.

To state our hyperplane restriction theorem, we first bring out the fact that every positive integer $A$ can be written as certain sums of binomial coefficients. Fix any $n\in\mathbb N^+$, there exist unique positive integers $a_n>a_{n-1}>\cdots>a_\delta$, where $\delta\geq 1$ and $a_j\geq j$ for every $j$, such that
$A=\binom{a_n}{n}+\cdots+\binom{a_\delta}{\delta}$. This is called the \textit{$n$-th Macaulay's representation} of $A$ and its existence and uniqueness can be proved by a greedy algorithm. These representations originally appeared in Macaulay's work of homogeneous ideals in polynomial rings~\cite{Ma}. Using the $n$-th Macaulay representation of $A$, we define the operation $A^{-<n>}:=\binom{a_n-1}{n-1}+\cdots+\binom{a_\delta-1}{\delta-1}$. In what follows, ``span" means the projective linear span.

\begin{theorem}\label{hyperplane thm}
Let $f:U\subset\mathbb P^n\rightarrow\mathbb P^M$ be a local holomorphic map such that $\dim(\mathrm{span}(f(U))\geq N$ for some positive integer $N\leq M$. Then, for a general hyperplane $H$ such that $H\cap U\neq\varnothing$, $\dim(\mathrm{span}(f(H\cap U))\geq N^{-<n>}$. 
\end{theorem}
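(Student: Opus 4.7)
The plan is to reduce the statement to one about linear systems of polynomials, where Green's classical hyperplane restriction theorem applies. By restricting $\mathbb{P}^M$ to the projective span of $f(U)$, I may assume that $f:U\to\mathbb{P}^{N-1}$ is linearly nondegenerate, so that a holomorphic lift $\tilde f=(\tilde f_1,\ldots,\tilde f_N):U\to\mathbb{C}^N$ has $N$ components linearly independent over $\mathbb{C}$. For a general hyperplane $H\subset\mathbb{P}^n$ with $H\cap U\neq\varnothing$, I pick a base point $p_0\in H\cap U$ and affine coordinates centered at $p_0$, and form the Taylor polynomial $F_i^{[D]}$ of $\tilde f_i$ at $p_0$ of degree at most $D$.

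Next I argue that for $D$ sufficiently large, the polynomials $F_i^{[D]}$ (and their restrictions to $H$) have exactly the same $\mathbb{C}$-linear dependencies as the full holomorphic functions $\tilde f_i$ (and their restrictions). The reason is that the space of $\mathbb{C}$-linear relations among the truncations is nonincreasing in $D$ and must eventually stabilize at the space of relations among the full holomorphic functions, which is trivial by nondegeneracy. Hence it suffices to bound $\dim\mathrm{span}\{F_i^{[D]}|_{H\cap U}\}$ from below.

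To invoke Green, I homogenize: each $F_i^{[D]}$ extends to a form $\bar F_i\in\bar S_D$ where $\bar S=\mathbb{C}[z_0,\ldots,z_n]$, and the resulting $N$ forms span a subspace $V\subset\bar S_D$ of dimension $N$. The hyperplane $H$ is cut out by a linear form $L\in\bar S_1$, and the restriction $V|_H:=V/(V\cap L\bar S_{D-1})$ has dimension equal to the polynomial span above. Green's hyperplane restriction theorem then yields, for $L$ in general position,
\[
\dim V|_H \;\geq\; \binom{D+n-1}{n-1}-\left(\binom{D+n}{n}-N\right)^{<D>},
\]
where $^{<D>}$ denotes Green's Macaulay-type operation at degree $D$ (in which only the upper entries of the binomial coefficients are reduced by one).

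Finally, I verify the combinatorial identity that for $D$ sufficiently large the right-hand side above stabilizes and equals $N^{-<n>}$ as defined in the paper; this compares Green's operation at degree $D$ with the paper's operation at dimension $n$, and should follow from standard manipulations of the $D$-th and $n$-th Macaulay representations of $\binom{D+n}{n}-N$ and $N$ respectively. The main technical obstacle I expect is precisely this stabilization identity: verifying that Green's degree-$D$ bound is independent of $D$ for $D$ large and coincides with $N^{-<n>}$. An alternative route would be induction on the source dimension $n$, using that a generic hyperplane in $\mathbb{P}^n$ is itself a $\mathbb{P}^{n-1}$ to which the theorem applies; however, the inductive step would also require a combinatorial lemma to produce exactly the operation $^{-<n>}$.
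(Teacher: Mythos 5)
Your overall strategy --- reduce to polynomial (rational) maps by Taylor truncation and homogenization, apply Green's hyperplane restriction theorem to the linear span of the components, and convert Green's codimension bound into the $N^{-<n>}$ bound via a combinatorial identity --- is exactly the route the paper takes. The reduction from local holomorphic maps to rational maps via jets/truncations, and the application of Green's theorem to the subspace of $\mathcal H_{d,n}$ spanned by the components, are set up essentially as you describe.

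The genuine gap is precisely the step you flag as the main technical obstacle: the combinatorial identity linking Green's operation on the codimension to the operation $(\cdot)^{-<n>}$ on $N$. (Your superscript $^{<D>}$ is a notational slip for Green's subscript operation $(\cdot)_{<D>}$; in the paper $A^{<n>}$ is reserved for Macaulay's operation raising both entries.) This is not a ``stabilization'' that holds only for $D$ large --- it is an exact identity valid for every degree $d\geq 1$, and it is the real combinatorial content of the proof. The paper isolates it as Lemma~\ref{binom}: if $m,k\geq 1$, $A,B\geq 0$ and $A+B=\binom{m+k}{k}-1$, then $A^{-<m>}+B_{<k>}=\binom{m+k-1}{k}-1$. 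Applying this with $m=n$, $k=d$, $A=N$, $B=\binom{n+d}{d}-N-1$ turns Green's bound directly into $\dim W_H\geq N^{-<n>}+1$, with no asymptotics in $D$. The lemma is itself proved by an induction that splits into cases according to whether $A\geq\binom{m+k-1}{m}$ or $B\geq\binom{m+k-1}{k}$; without it you do not have a proof, and a ``should follow from standard manipulations'' gesture does not close the gap.

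You also have an off-by-one in the setup. Since $\dim(\mathrm{span}(f(U)))\geq N$, the linear span sits in at least $\mathbb P^N$, so you should work with $N+1$ linearly independent components, hence $\dim V\geq N+1$ and the codimension fed into Green's theorem is $\binom{D+n}{n}-N-1$, not $\binom{D+n}{n}-N$. With only $N$ components the same computation via Lemma~\ref{binom} yields $\dim(\mathrm{span}(f(H\cap U)))\geq (N-1)^{-<n>}$, which is strictly weaker than the claim.
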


The equality in the theorem can hold, for example, when $f$ is a rational map whose components are all the linearly independent monic monomials of a fixed degree. Our theorem is obtained from combining Green's hyperplane restriction theorem (Theorem~\ref{green}) with a new combinatorial identity (Lemma~\ref{binom}). It holds for any local holomorphic maps between projective spaces and we believe that it will find applications elsewhere.

We now briefly explain the idea behind our proof for the gap phenomenon. Suppose $f$ is a local proper map from an $n$-dimensional generalized ball to an $N$-dimensional generalized ball such that the image of $f$ is not contained in any hyperplane. We first use our hyperplane restriction theorem repeatedly to obtain dimension estimates  for the linear spans of the images of the linear subspaces in all dimensions. Since we know that $f$ is orthogonal, for any pair of orthogonal subspaces in the source projective space, their images under $f$ will span two orthogonal subspaces in the target projective space. If $N$ falls within any of the ``gaps'', i.e. the intervals $\mathcal J_k$ introduced earlier, by some amount of arithmetic we can show that the previously obtained dimension estimates will imply that there are two orthogonal subspaces in the target space whose sum of dimensions is at least $N$, which is impossible.


\section{Macaulay representation and a hyperplane restriction theorem for holomorphic mappings}

Every positive integer $A$ can be written as certain sums of binomial coefficients. Fix any $n\in\mathbb N^+$, there exist unique positive integers $a_n>a_{n-1}>\cdots>a_\delta$, where $\delta\geq 1$ and $a_j\geq j$ for every $j$, such that
$A=\binom{a_n}{n}+\cdots+\binom{a_\delta}{\delta}$. This is called the $n$-th Macaulay's representation of $A$. These representations naturally appeared in the works of Macaulay~\cite{Ma} and Green~\cite{Gr} on homogeneous ideals in polynomial rings. There are several operations pertaining to the Macaulay's representations, as follows. 
Let $A=\binom{a_n}{n}+\cdots+\binom{a_\delta}{\delta}$ be the $n$-th Macaulay's representation of $A$, define 
$$
A_{<n>}=\binom{a_n-1}{n}+\cdots+\binom{a_\delta-1}{\delta};\,\,\,\,\,\,\,\, 
A^{-<n>}=\binom{a_n-1}{n-1}+\cdots+\binom{a_\delta-1}{\delta-1}.
$$
Here, we employ the convention that $\binom{a}{b}=0$ whenever $a<b$ or $b=0$. The seemingly peculiar choices of notations for these operations are due the fact that $A_{<n>}$ was used by Green and $A^{<n>}:=\binom{a_n+1}{n+1}+\cdots+\binom{a_\delta+1}{\delta+1}$ was used by Macaulay and $A^{-<n>}$ is in some sense the opposite of $A^{<n>}$.

We will need the following Green's hyperplane restriction theorem, which  has already been used in~\cite{GLV} to study CR mappings between real hyperquadrics.

\begin{theorem}[\cite{Gr}]\label{green}
Let $W$ be a complex vector subspace of $H^0(\mathcal O_{\mathbb P^n}(d))$ of codimension $c$.
Let $W_H\subset H^0(\mathcal O_H(d))$ be the restriction of $W$ to a general hyperplane $H$ and $c_H$ be its codimension. Then, $c_H\leq c_{<d>}$.
\end{theorem}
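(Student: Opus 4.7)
The plan is to follow the two-stage strategy of Green's original proof: first reduce to the case of a monomial subspace via a Gr\"obner degeneration, and then establish a sharp combinatorial identity in the monomial case.

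For the reduction, I would replace $W$ by its generic initial subspace with respect to the reverse lexicographic order. Concretely, after applying a generic element $g\in GL_{n+1}(\mathbb C)$ to the coordinates, the subspace $gW\subset H^0(\mathcal O_{\mathbb P^n}(d))$ acquires a well-defined monomial initial subspace $W_0:=\mathrm{in}_{\mathrm{revlex}}(gW)$ of the same codimension $c$, and by a theorem of Galligo this $W_0$ is Borel-fixed. A one-parameter Gr\"obner degeneration connects $gW$ to $W_0$ as a flat family $\{W_t\}_{t\in\mathbb C}$. Since the kernel of the restriction map $H^0(\mathcal O_{\mathbb P^n}(d))\rightarrow H^0(\mathcal O_H(d))$ is a fixed linear subspace (the multiples of the form defining $H$), the intersection $W_t\cap\ker$ is upper-semicontinuous in $t$, hence $c_H(W_t)$ is lower-semicontinuous and $c_H(W)=c_H(gW)\leq c_H(W_0)$. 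It thus suffices to prove the theorem for Borel-fixed monomial $W$, choosing $H=\{x_n=0\}$ compatibly with the revlex order.

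In the monomial case the restriction $W_H$ is spanned by the images of those monomials in $W$ that are free of $x_n$, so $c_H$ counts exactly the degree-$d$ monomials in $x_0,\ldots,x_{n-1}$ \emph{not} lying in $W$. Setting $W^{(1)}:=\{f\in H^0(\mathcal O_{\mathbb P^n}(d-1)):x_nf\in W\}$, which is again a monomial subspace, the partition of the degree-$d$ monomial basis into those free of $x_n$ and those divisible by $x_n$ yields
$$
c=c_H+c^{(1)},\qquad c^{(1)}:=\mathrm{codim}\,W^{(1)}.
$$
Combined with the elementary identity $c=c_{<d>}+c^{-<d>}$, which follows directly from the $d$-th Macaulay representation of $c$, the theorem is equivalent to the lower bound $c^{(1)}\geq c^{-<d>}$.

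The crux is this final estimate: proving $c^{(1)}\geq c^{-<d>}$ for every Borel-fixed monomial subspace of codimension $c$ in degree $d$. The natural approach is to show that among all such subspaces the lex-segment subspace is extremal for $c_H$, and then to verify by direct unwinding of the Macaulay representation that the lex segment achieves exactly $c_H=c_{<d>}$, equivalently $c^{(1)}=c^{-<d>}$. The extremality of lex segments would be proved by induction on $n$ and $d$, iterating the decomposition $c=c_H+c^{(1)}$ and invoking Macaulay's classical growth theorem for Hilbert functions to control the passage between consecutive degrees. The semicontinuity step is conceptually clean but does demand a modest amount of Gr\"obner/gin machinery (existence of the generic initial space, Galligo's theorem, flatness of Gr\"obner degenerations); the main obstacle, however, is the combinatorial extremality of lex segments among Borel-fixed subspaces, which is really the heart of the theorem and where all of the bookkeeping with Macaulay's representation is concentrated.
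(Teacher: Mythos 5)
You should first note that the paper does not prove this statement at all: it is quoted verbatim from Green's paper \cite{Gr} and used as a black box (its only role here is as input to Theorem~\ref{hyperplane thm} via Lemma~\ref{binom}), so there is no internal proof to compare against; your proposal has to stand on its own as a proof of Green's theorem.

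As such, it has a genuine gap. The reduction steps are fine and standard: passing to the reverse-lexicographic generic initial subspace (Galligo, flatness, semicontinuity of $\dim(W_t\cap\ker)$ so that $c_H$ can only jump up at the monomial degeneration), the count $c=c_H+c^{(1)}$ for a Borel-fixed monomial $W$ with $H=\{x_n=0\}$, and the Pascal-rule identity $c=c_{<d>}+c^{-<d>}$ (valid with the usual convention $\binom{a}{0}=1$; note the paper's convention sets $\binom{a}{0}=0$, so you should state which convention you use). But after this reduction everything has been repackaged, not proved: the inequality $c^{(1)}\geq c^{-<d>}$ for Borel-fixed monomial subspaces \emph{is} Green's theorem, and your treatment of it consists of naming a strategy (``show the lex segment is extremal among Borel-fixed subspaces, by induction on $n$ and $d$, invoking Macaulay's growth theorem'') without carrying out the induction, identifying the inductive statement, or explaining how Borel-fixedness enters. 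You yourself flag this as the main obstacle, which is an accurate self-assessment: all of the Macaulay-representation bookkeeping that constitutes the actual content of the theorem is deferred. There is also a logical caution: Green's theorem is standardly used to \emph{derive} Macaulay's growth theorem (that is the point of \cite{Gr}), so if you appeal to Macaulay's theorem you must either cite an independent proof of it or, better, prove the extremality of lex segments within Borel-fixed subspaces directly (e.g.\ by induction on $n$ using the splitting of a Borel-fixed set along powers of $x_n$ and the fact that each slice is again Borel-fixed in fewer variables). As it stands the proposal is a correct roadmap to the theorem, not a proof of it.
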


We begin by stating a key combinatorial lemma related to these operations, which connects Green's hyperplane theorem and our Theorem~\ref{hyperplane thm}. To streamline the presentation, we defer its proof to the Section~\ref{last section}.
\begin{lemma}\label{binom}
Suppose $m,k\geq 1$ and $A,B\geq 0$. If $A+B=\binom{m+k}{k}-1$, then $$A^{-<m>}+B_{<k>}=\binom{m+k-1}{k}-1.$$
Here, we adopt the convention that $0^{-<m>}=0_{<k>}=0$.
\end{lemma}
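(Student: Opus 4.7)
The plan is to induct on $m+k$, with $m=1$ and $k=1$ as base cases. For $k=1$, the Macaulay representations are simple enough that one can tabulate $A^{-<m>}+B_{<1>}$ directly on $A+B=m$: it equals $m-1$ in each of the subcases $A=0$, $1\le A\le m-1$, and $A=m$, the last requiring the paper's convention $\binom{0}{0}=0$ (where the representation of $A=m$ is the ``stair-step'' $\binom{m}{m}+\binom{m-1}{m-1}+\cdots+\binom{1}{1}$). The case $m=1$ is even simpler, as both $A^{-<1>}$ and $B_{<k>}$ vanish on the ranges that arise.

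For the inductive step with $m,k\ge 2$, I would key on the top index $a_m$ of the $m$-th Macaulay representation of $A$. Since $A+B=\binom{m+k}{k}-1<\binom{m+k}{m}$, one has $a_m\le m+k-1$ whenever $A\ge 1$, and the argument splits on whether $A\ge\binom{m+k-1}{m}$ or $A<\binom{m+k-1}{m}$.

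In the first case I would peel off the top term: write $A=\binom{m+k-1}{m}+A'$, where $A'$ carries the $(m-1)$-th Macaulay representation given by the remaining terms. Using $\binom{m+k-1}{m}=\binom{m+k-1}{k-1}$ and Pascal, one checks $A'+B=\binom{(m-1)+k}{k}-1$, so the inductive hypothesis at $(m-1,k)$ gives $(A')^{-<m-1>}+B_{<k>}=\binom{m+k-2}{k}-1$. Since $A^{-<m>}=\binom{m+k-2}{m-1}+(A')^{-<m-1>}$ by definition, combining with the Pascal identity $\binom{m+k-2}{k-1}+\binom{m+k-2}{k}=\binom{m+k-1}{k}$ closes this case. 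In the second case, $A<\binom{m+k-1}{m}$ forces $B\ge\binom{m+k-1}{k}$, and together with $B<\binom{m+k}{k}$ this pins down the top index of $B$'s $k$-th Macaulay representation as $b_k=m+k-1$; peeling $B=\binom{m+k-1}{k}+B'$ analogously and applying the inductive hypothesis at $(m,k-1)$ to $(A,B')$ finishes the case by the same Pascal identity.

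The main obstacle is pure bookkeeping rather than substantive difficulty: one must check that stripping off the leading binomial really leaves a valid Macaulay representation of the residual at one lower index (which follows from the greedy definition, with the constraint $a_{m-1}\le a_m-1=m+k-2$ automatic), and that the conventions $\binom{a}{0}=0$ and $0^{-<m>}=0_{<k>}=0$ are honored at the boundary cases --- in particular when $A=0$, when $B=0$, or when a residual's Macaulay representation terminates before reaching degree~$1$. Once those conventions are in place, each inductive case reduces to a single application of Pascal's rule.
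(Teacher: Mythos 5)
Your proposal is correct and takes essentially the same route as the paper: induction with base cases $m=1$ and $k=1$, then in the inductive step a case split on whether $A\ge\binom{m+k-1}{m}$ or $B\ge\binom{m+k-1}{k}$, peeling the leading binomial off the Macaulay representation, applying the inductive hypothesis at $(m-1,k)$ or $(m,k-1)$, and closing with Pascal's rule $\binom{m+k-2}{k-1}+\binom{m+k-2}{k}=\binom{m+k-1}{k}$. The only cosmetic difference is that the paper phrases the dichotomy as ``at least one of the two inequalities holds'' rather than your mutually exclusive split, and it does not single out the subcases of the $k=1$ base as explicitly as you do; otherwise the arguments coincide step for step.
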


We can now prove our hyperplane restriction theorem. 

\begin{proof}[Proof of Theorem~\ref{hyperplane thm}]
We first prove the theorem for rational maps. Let $f:\mathbb P^n\dasharrow\mathbb P^M$ be a rational map such that $\dim(\mathrm{span}(f(\mathbb P^n))\geq N$. Let $f=[f_0,\ldots,f_M]$, where $f_j\in\mathbb C[z_0,\ldots,z_n]$, $0\leq j\leq M$, are homogeneous polynomials of degree $d$ without non-constant common factors. 

Let $\mathcal H_{d,n}\subset\mathbb C[z_0,\ldots,z_n]$ be the vector subspace of homogeneous polynomials of degree $d$ and $W\subset\mathcal H_{d,n}$ be the subspace spanned by $f_0,\ldots,f_M$. 
Thus, we have 
$$
N+1\leq\dim(W)\leq\dim(\mathcal H_{d,n})=\binom{n+d}{d}.
$$ 
For a hyperplane $H\subset\mathbb P^n$, choose a set of homogeneous coordinates on $H$ and let $W_H\subset\mathcal H_{d,n-1}$ be the subspace spanned by restrictions of $f_0,\ldots,f_M$ on $H$. By Green's Theorem~\ref{green}, for a general hyperplane $H$, we have
$$
\dim(\mathcal H_{d,n-1})-\dim(W_H)\leq \left(\dim(\mathcal H_{d,n})-\dim(W)\right)_{<d>}
\leq \left(\dim(\mathcal H_{d,n})-N-1\right)_{<d>},
$$
in which the last inequality follows from the fact that $c_{<d>}\leq c'_{<d>}$ if $c\leq c'$. Thus,
\begin{equation}\label{dim wh}
\dim(W_H)
\geq\binom{n+d-1}{d}-\left(\binom{n+d}{d}-N-1\right)_{<d>}
\end{equation}
Now let $B=\binom{n+d}{d}-N-1$. Then $B\geq 0$ and 
$$
N+B=\binom{n+d}{d}-1.
$$
By Lemma~\ref{binom}, we have from~$(1)$
\begin{eqnarray*}
\dim(W_H)
&\geq&\binom{n+d-1}{d}-B_{<d>}\\
&=&\binom{n+d-1}{d}-\left(\binom{n+d-1}{d}-1-N^{-<n>}\right)\\
&=&N^{-<n>}+1.
\end{eqnarray*}
Thus $\dim(\mathrm{span}(f(H))\geq N^{-<n>}$ and we have proved the theorem for rational maps.

For the general case, if $f:U\subset\mathbb P^n\rightarrow\mathbb P^M$ is a local holomorphic map such that $\dim(\mathrm{span}(f(U))\geq N$, then for a sufficiently large $k$, the $k$-th order jet of $f$ can be represented by a rational map $f^\flat:\mathbb P^n\dasharrow\mathbb P^m$ (e.g. a truncated Taylor polynomial of $f$ at a point in $U$ after homogenization) such that $\dim(\mathrm{span}(f^\flat(\mathbb P^n))\geq N$. Since the restriction of $f^\flat$ to a hyperplane $H\subset\mathbb P^n$ represents the $k$-th order jet of $f|_{U\cap H}$, we see from the proven case of rational maps that for a general hyperplane $H$, we have 
$
\dim(\mathrm{span}(f(H\cap U))\geq \dim(\mathrm{span}(f^\flat(H))\geq N^{-<n>}.
$
\end{proof}

\noindent\textbf{Remark.} The equality in Theorem~\ref{hyperplane thm} can hold since the equality can hold in Green's theorem~\cite{Gr}. One can also see directly that the equality holds when $F$ is the rational map whose components are all the linearly independent monic monomials of a fixed degree in $\mathbb C[z_0,\ldots,z_n]$.   

Sometimes it is convenient to use the following counterpart of Theorem~\ref{hyperplane thm} and we will also elaborate a couple of special cases which are very useful  for the study of CR mappings between real hyperquadrics (e.g. see~\cite{GN}).

\begin{theorem}\label{faran type}
Let $g:U\subset\mathbb P^m\rightarrow\mathbb P^{m'}$ be a local holomorphic map and $\ell\in\mathbb N^+$ such that $\ell\leq m-1$. If $g$ maps $\ell$-planes to $\ell'$-planes, then it maps $(\ell+1)$-planes to $((\ell'+1)^{<\ell>}-1)$-planes. In particular,

$(i)$ if $\ell'\leq\ell-1$, then the image of $g$ is contained in an $\ell'$-plane;

$(ii)$ if $\ell\leq \ell'\leq 2\ell-1$, then $g$ maps $(\ell+k)$-planes to $(\ell'+k)$-planes for $k\geq 0$;

\end{theorem}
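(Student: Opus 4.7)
The plan is to apply the hyperplane restriction theorem (Theorem~\ref{hyperplane thm}) to $g$ restricted to an arbitrary $(\ell+1)$-plane, and then invert the resulting Macaulay-type inequality using a near-inverse relationship between the operations $(\cdot)^{-<n>}$ and $(\cdot)^{<n-1>}$. Concretely, let $L \subset \mathbb{P}^m$ be an $(\ell+1)$-plane meeting $U$ and set $s := \dim(\mathrm{span}(g(L \cap U)))$. Viewing $g|_{L \cap U}$ as a local holomorphic map on an open subset of $\mathbb{P}^{\ell+1}$, Theorem~\ref{hyperplane thm} gives that for a general hyperplane $H \subset L$ (which is an $\ell$-plane of $\mathbb{P}^m$) one has $\dim(\mathrm{span}(g(H \cap U))) \geq s^{-<\ell+1>}$. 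The hypothesis that $g$ sends every $\ell$-plane into an $\ell'$-plane forces the left-hand side to be at most $\ell'$, so $s^{-<\ell+1>} \leq \ell'$.

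To extract a bound on $s$ itself, I would observe that $(B^{<\ell>})^{-<\ell+1>} = B$ for every $B \geq 0$: if $B = \sum_j \binom{b_j}{j}$ is the $\ell$-th Macaulay representation of $B$, then $\sum_j \binom{b_j+1}{j+1}$ is automatically the $(\ell+1)$-th Macaulay representation of $B^{<\ell>}$ (the inequalities $b_j + 1 \geq j+1$ and $b_\ell+1 > b_{\ell-1}+1 > \cdots$ transfer directly), and applying $(\cdot)^{-<\ell+1>}$ returns $B$. Combined with the standard (weak) monotonicity of $s \mapsto s^{-<\ell+1>}$, taking $B = \ell'+1$ shows that $s \geq (\ell'+1)^{<\ell>}$ would force $s^{-<\ell+1>} \geq \ell'+1$, a contradiction; hence $s \leq (\ell'+1)^{<\ell>} - 1$, which is the main statement.

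For the special cases I would compute $(\ell'+1)^{<\ell>}$ explicitly by the greedy algorithm and then iterate. For $(i)$ with $\ell' \leq \ell - 1$, the $\ell$-th Macaulay representation of $\ell'+1$ is $\binom{\ell}{\ell} + \binom{\ell-1}{\ell-1} + \cdots + \binom{\ell - \ell'}{\ell - \ell'}$, each summand equal to $1$, so $(\ell'+1)^{<\ell>} = \ell'+1$ and the image of every $(\ell+1)$-plane still lies in an $\ell'$-plane; iterating the theorem (the hypothesis $\ell' \leq \tilde\ell - 1$ is preserved as $\tilde\ell$ increases from $\ell$ to $m-1$) shows the whole image of $g$ lies in an $\ell'$-plane. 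For $(ii)$ with $\ell \leq \ell' \leq 2\ell - 1$, the greedy algorithm yields $\ell'+1 = \binom{\ell+1}{\ell} + \binom{\ell-1}{\ell-1} + \cdots + \binom{2\ell - \ell'}{2\ell - \ell'}$, and applying $(\cdot)^{<\ell>}$ raises the leading term to $\binom{\ell+2}{\ell+1} = \ell+2$ while each trailing $1$ stays $1$, giving $(\ell'+1)^{<\ell>} = \ell'+2$; hence $g$ maps $(\ell+1)$-planes into $(\ell'+1)$-planes, and since the hypothesis $\tilde\ell \leq \tilde\ell' \leq 2\tilde\ell - 1$ transfers to $(\ell+1, \ell'+1)$, induction on $k$ completes $(ii)$. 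The main obstacle, apart from bookkeeping, is verifying the inverse identity $(B^{<\ell>})^{-<\ell+1>} = B$ and the monotonicity of $(\cdot)^{-<\ell+1>}$; both are elementary properties of the greedy Macaulay expansion, but the degenerate cases---terms with $a_j = j$, for which the convention $\binom{a}{0} = 0$ must be invoked---deserve a careful check.
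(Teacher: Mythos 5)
Your argument is correct and follows essentially the same route as the paper's: apply Theorem~\ref{hyperplane thm} inside an $(\ell+1)$-plane, invert via the identity $((\ell'+1)^{<\ell>})^{-<\ell+1>}=\ell'+1$ (the paper asserts this without derivation; you prove the more general $(B^{<\ell>})^{-<\ell+1>}=B$ together with monotonicity of $(\cdot)^{-<\ell+1>}$, which is the same fact), and then work out the Macaulay representations of $\ell'+1$ in the two regimes to set up the inductions for $(i)$ and $(ii)$. The extra detail you supply is a faithful elaboration of the paper's terse proof rather than a different method.
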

\begin{proof}
Suppose on the contrary the image of a general $(\ell+1)$-plane under $g$ is not contained in any $((\ell'+1)^{<\ell>}-1)$-plane. Since $((\ell'+1)^{<\ell>})^{-<\ell+1>}=\ell'+1$, Theorem~\ref{hyperplane thm} implies that the image of a general $\ell$-plane is not contained in any $\ell'$-plane.

If $\ell'\leq\ell-1$, then $\ell'+1=\binom{\ell}{\ell}+\binom{\ell-1}{\ell-1}+\cdots+\binom{\delta}{\delta}$ for some $\delta\geq 1$, so $(\ell'+1)^{<\ell>}-1=\ell'$. Therefore we deduce inductively that the image of $g$ is contained in an $\ell'$-plane.

If $\ell\leq \ell'\leq 2\ell-1$, then $\ell'+1=\binom{\ell+1}{\ell}+\binom{\ell-1}{\ell-1}+\binom{\ell-2}{\ell-2}+\cdots+\binom{\delta}{\delta}$ for some $\delta\geq 1$. Thus,
$(\ell'+1)^{<\ell>}-1=\ell'+1$ and so $g$ maps $(\ell+1)$-planes to $(\ell'+1)$-planes. Moreover, as $\ell+1\leq\ell'+1< 2(\ell+1)-1$, we can proceed inductively and the desired result follows.
\end{proof}

\noindent\textbf{Remark.} One can apply Theorem~\ref{faran type} repeatedly to get the following simple formula. Under the same hypotheses, if the $\ell$-th Macaulay's representation of $\ell'+1$ is $\binom{\lambda_\ell}{\ell}+\cdots+\binom{\lambda_\delta}{\delta}$, then for any $k\in\mathbb N^+$, $g$ maps every $(\ell+k)$-plane to some linear subspace of dimension 
$\binom{\lambda_\ell+k}{\ell+k}+\cdots+\binom{\lambda_\delta+k}{\delta+k}-1$.


\section{Gap phenomenon for local orthogonal maps}\label{gap}

We now recall some basics of \textit{local orthogonal maps} and the reader can see~\cite{GN} for more detail.

Let $r,s,t\in\mathbb N$ such that $r+s+t>0$. Denote by $\mathbb C^{r,s,t}$ the Euclidean space $\mathbb C^{r+s+t}$ equipped with the standard Hermitian bilinear form $\langle z, w\rangle_{r,s,t}$ of signature $(r,s,t)$, i.e.
$$
	\langle z, w\rangle_{r,s,t}
	=z_1\bar w_1+\cdots+z_r\bar w_r-z_{r+1}\bar w_{r+1}-\cdots - z_{r+s}\bar w_{r+s},
$$
where $z=(z_1,\ldots,z_{r+s+t})$ and $w=(w_1,\ldots,w_{r+s+t})$.
Define the indefinite norm  $\|z\|^2_{r,s,t}=\langle z, z\rangle_{r,s,t}$ and call any $z\in \mathbb C^{r,s,t}$ a \textit{positive point} if $\|z\|^2_{r,s,t}>0$; a \textit{negative point} if  $\|z\|^2_{r,s,t}<0$ and a \textit{null point} if  $\|z\|^2_{r,s,t}=0$. If $\langle z, w\rangle_{r,s,t}=0$, we say that $z$ is orthogonal to $w$ and write $z\perp w$. In addition, the \textit{orthogonal complement} of $z$ is defined as
$$z^{\perp}=\{w\in  \mathbb C^{r,s,t} \mid \langle z, w\rangle_{r,s,t}=0\}.$$

We denote by $\mathbb P^{r,s,t}:=\mathbb P\mathbb C^{r,s,t}$ the projectivization.  We write $\mathbb C^{r,s}$ and $\mathbb P^{r,s}$ instead of $\mathbb C^{r,s,0}$ and $\mathbb P^{r,s,0}$. 
On $\mathbb P^{r,s,t}$, even though the inner product is no longer defined, the notions of positive points, negative points and null points still make sense. Likewise, and more importantly, the orthogonality remains well defined. On $\mathbb P^{r,s,t}$, the set of positive points $\mathbb B^{r,s,t}\subset\mathbb P^{r,s,t}$ is called a \textit{generalized ball} since $\mathbb B^{1,s}$ is just the ordinary $s$-dimensional complex unit ball $\mathbb B^s$ embedded in $\mathbb P^s$. In addition, the boundary $\partial \mathbb B^{r,s,t}$ of $\mathbb B^{r,s,t}$ is simply the set of null points on $\mathbb P^{r,s,t}$.

\begin{definition}[\cite{GN}]\label{orthogonal def}
Let $U \subset \mathbb P^{r,s,t}$ be  a connected open set containing a null point. We call a holomorphic map $f: U\rightarrow\mathbb P^{r',s',t'}$ \textbf{orthogonal} if $f(p)\perp f(q)$ for any $p, q\in U$ such that $p\perp q$.
\end{definition}

\begin{proposition}\label{contradict}
Let $f:U\subset\mathbb P^{r,s,t}\rightarrow\mathbb P^{r',s'}$ be a local orthogonal map. Then, for every linear subspace $E\subset\mathbb P^{r,s,t}$ such that $E\cap U\neq\varnothing$ and $E^\perp\cap U\neq\varnothing$, 
$$
\dim(\mathrm{span}(f(E\cap U))+\dim(\mathrm{span}(f(E^\perp\cap U))\leq \dim(\mathbb P^{r',s'})-1.
$$
\end{proposition}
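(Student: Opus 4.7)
The plan is to convert the pointwise orthogonality preserved by $f$ into orthogonality of the projective spans $V_1:=\mathrm{span}(f(E\cap U))$ and $V_2:=\mathrm{span}(f(E^\perp\cap U))$ inside $\mathbb P^{r',s'}$, and then to exploit the non-degeneracy of the Hermitian form on $\mathbb C^{r',s'}$ (which holds precisely because the target signature is $(r',s',0)$ with $t'=0$) to read off the claimed dimension bound.

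First I would unpack what $E^{\perp}$ means: by the very definition of the orthogonal complement, any $p\in E$ and $q\in E^{\perp}$ satisfy $p\perp q$ in $\mathbb P^{r,s,t}$. Applying Definition~\ref{orthogonal def} to $f$ then yields $f(p)\perp f(q)$ in $\mathbb P^{r',s'}$ for every $p\in E\cap U$ and every $q\in E^{\perp}\cap U$. Thus every point in $f(E\cap U)$ is orthogonal to every point in $f(E^{\perp}\cap U)$.

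Next I would lift to affine cones. Let $\tilde V_1,\tilde V_2\subset\mathbb C^{r',s'}$ be the linear subspaces whose projectivizations are $V_1$ and $V_2$, respectively. Since $\langle\cdot,\cdot\rangle_{r',s'}$ is sesquilinear, vanishing on a spanning set of each factor forces vanishing on the whole cones, so $\tilde V_2\subseteq\tilde V_1^{\perp}$. Because the form on $\mathbb C^{r',s'}$ is non-degenerate, $\dim_{\mathbb C}\tilde V_1^{\perp}=(r'+s')-\dim_{\mathbb C}\tilde V_1$, which gives
$$
\dim_{\mathbb C}\tilde V_1+\dim_{\mathbb C}\tilde V_2\leq r'+s'.
$$
Passing from vector to projective dimensions subtracts $1$ on each side, producing $\dim V_1+\dim V_2\leq r'+s'-2=\dim\mathbb P^{r',s'}-1$, as required.

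The only substantive input is the non-degeneracy of $\langle\cdot,\cdot\rangle_{r',s'}$, which is exactly why the hypothesis restricts the target to $\mathbb P^{r',s'}$ rather than a general $\mathbb P^{r',s',t'}$; everything else is routine linear algebra together with the bilinear extension of orthogonality. Consequently I do not foresee a genuine obstacle in this proposition itself. The real work has already been done in the hyperplane restriction theorem of Section~2 and in the orthogonality formalism imported from \cite{GN}; this statement simply packages them into the dimension inequality that will be combined with Theorem~\ref{hyperplane thm} to derive the gap phenomenon.
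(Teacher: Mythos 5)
Your proposal is correct and follows essentially the same argument as the paper: use the orthogonality of $f$ to place $\mathrm{span}(f(E^\perp\cap U))$ inside $(\mathrm{span}(f(E\cap U)))^\perp$, then invoke non-degeneracy of the Hermitian form on $\mathbb C^{r',s'}$ to obtain the dimension bound. You spell out the lift to affine cones and the bilinear extension step, which the paper leaves implicit, but the substance is identical.
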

\begin{proof}
By orthogonality, $f(E^\perp\cap U)\subset (f(E\cap U))^\perp$ and since the Hermitian form on $\mathbb C^{r',s'}$ is non-degenerate, we have
$$
\dim(\mathrm{span}(f(E^\perp\cap U))+1\leq\dim(\mathbb C^{r',s'})-(\dim(\mathrm{span}(f(E\cap U))+1),
$$ and the desired result follows.
\end{proof}

Let $n,N\in\mathbb N$ such that $n+1\leq N<\binom{n+2}{2}=\binom{n+2}{n}$. By considering the $n$-th Macaulay representation of $N$, we deduce that $N$ is of the following form:
$$
N=N(n;a,b):=\binom{n+1}{n}+\cdots+\binom{n-a+1}{n-a}+ b
$$
for some integers $a,b\geq 0$ such that $b\leq n-a-1$. In fact, the $n$-th Macaulay's representation of $N(n;a,b)$ is
$$
N(n;a,0)=\binom{n+1}{n}+\cdots+\binom{n-a+1}{n-a}
$$
and for $b\geq 1$,
$$
N(n;a,b)=\binom{n+1}{n}+\cdots+\binom{n-a+1}{n-a}+ \binom{n-a-1}{n-a-1}+\cdots+\binom{n-a-b}{n-a-b}.
$$

\begin{lemma}\label{nab}
$\displaystyle
N(n;a,b)^{-<n>}=\left\{
\begin{matrix}[lcl]
N(n-1;a,b)&\mathrm{if}& n-a-b\geq 2;\\
N(n-1;a,b-1)&\mathrm{if}& n-a-b=1\,\,\mathrm{and}\,\,b\geq 1. 
\end{matrix}
\right.
$
\end{lemma}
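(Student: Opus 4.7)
The plan is to simply unwind both sides using the explicit $n$-th Macaulay representation of $N(n;a,b)$ that the paper has just written down, then apply the operation $^{-<n>}$ term-by-term. The only real care needed is with the stated convention $\binom{a}{0}=0$, which is exactly what forces the case split in the statement.

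First I would note that the displayed expression
$$
N(n;a,b)=\binom{n+1}{n}+\cdots+\binom{n-a+1}{n-a}+\binom{n-a-1}{n-a-1}+\cdots+\binom{n-a-b}{n-a-b}
$$
is indeed the $n$-th Macaulay representation, since the upper entries $n+1,n,\ldots,n-a+1,n-a-1,\ldots,n-a-b$ are strictly decreasing and each is $\geq$ its lower entry; so uniqueness of the representation lets me apply the operation $^{-<n>}$ directly by reducing each upper and lower entry by $1$:
$$
N(n;a,b)^{-<n>}=\binom{n}{n-1}+\cdots+\binom{n-a}{n-a-1}+\binom{n-a-2}{n-a-2}+\cdots+\binom{n-a-b-1}{n-a-b-1}.
$$

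In the first case $n-a-b\geq 2$, the smallest lower entry $n-a-b-1$ is at least $1$, so every binomial above is nonzero. Since $b\leq (n-1)-a-1=n-a-2$, the expression $N(n-1;a,b)$ is well-defined by the formula preceding the lemma, and block-by-block comparison identifies it with the right-hand side above.

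In the second case $n-a-b=1$ with $b\geq 1$, the very last term in the displayed sum is $\binom{0}{0}$, which equals $0$ by the convention $\binom{a}{0}=0$. Dropping this vanishing term yields
$$
\binom{n}{n-1}+\cdots+\binom{n-a}{n-a-1}+\binom{n-a-2}{n-a-2}+\cdots+\binom{1}{1},
$$
and since $b-1=n-a-2=(n-1)-a-1$, this is precisely the $(n-1)$-th Macaulay representation of $N(n-1;a,b-1)$; the subcase $b=1$ simply degenerates so that the second block vanishes entirely and we get $N(n-1;a,0)$. No serious obstacle is anticipated: the argument is a direct term-comparison, and the only genuinely delicate point is recognizing that the $\binom{0}{0}=0$ convention is exactly what shifts $b$ to $b-1$ in the boundary case.
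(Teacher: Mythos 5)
Your proof is correct and is exactly the term-by-term computation that the paper dismisses as an ``immediate consequence of the $n$-th Macaulay representation of $N(n;a,b)$'' — you have simply spelled out what makes it immediate. In particular you correctly spotted the one non-obvious point, namely the paper's convention $\binom{a}{0}=0$ making $\binom{0}{0}$ vanish, which is what shifts $b$ down to $b-1$ in the boundary case $n-a-b=1$.
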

\begin{proof}
It is an immediate consequence of the $n$-th Macaulay representation of $N(n;a,b)$ 
described above.
\end{proof}

\begin{proposition}\label{dim prop}
Let $a,b,n$ be non-negative integers. Let $g:U\subset\mathbb P^n\rightarrow\mathbb P^{N(n;a,b)}$ be a local holomorphic map whose image is not contained in a proper linear subspace. Let $D_m=\dim(\mathrm{span}(g(M\cap U)))$ for a general $m$-dimensional linear subspace $M$ intersecting $U$. Then,
$$
D_m\geq\left\{
\begin{matrix}[lcl]
N(m;a,b) &\,\,\mathrm{if}& a+b+1\leq m\leq n-1;\\
N(m;a,m-a-1) &\,\,\mathrm{if}& a+1\leq m\leq a+b 
\end{matrix}
\right.
$$
\end{proposition}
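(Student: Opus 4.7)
My plan is to iterate the hyperplane restriction theorem (Theorem~\ref{hyperplane thm}) along a general descending flag $\mathbb{P}^n = M_0 \supset M_1 \supset \cdots \supset M_{n-a-1}$, with each $M_k$ a general hyperplane of $M_{k-1}$ (so $\dim M_k = n-k$), and to use Lemma~\ref{nab} at each stage to keep the Macaulay data in the closed form $N(\cdot;\cdot,\cdot)$. Since the image of $g$ is not contained in any proper linear subspace, $\dim(\mathrm{span}(g(U))) = N(n;a,b)$, and for a sufficiently general flag each $M_k \cap U$ is non-empty; as every general $m$-plane arises as the $M_{n-m}$ of such a flag, working along the flag suffices to bound $D_m$. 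Applying Theorem~\ref{hyperplane thm} inside $M_{k-1}\cong \mathbb{P}^{n-k+1}$ at each step gives
\[
D_{n-k} \;\geq\; D_{n-k+1}^{-<n-k+1>}.
\]

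I will then split into two regimes. In the first regime $1 \leq k \leq n-a-b-1$, the current ambient dimension $n' = n-k+1$ satisfies $n'-a-b \geq 2$, so the first case of Lemma~\ref{nab} applies at every step; a straightforward induction on $k$ yields $D_{n-k} \geq N(n-k;a,b)$. Writing $m = n-k$, this is the first asserted bound on $a+b+1 \leq m \leq n-1$. For the second regime I continue the iteration past $m = a+b$: at $k = n-a-b$ the ambient dimension is $n' = a+b+1$, so $n'-a-b = 1$, and (assuming $b \geq 1$, otherwise the second regime is vacuous) the second case of Lemma~\ref{nab} gives $D_{a+b} \geq N(a+b;a,b-1) = N(a+b; a, (a+b)-a-1)$. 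At each further step the current parameters $(n',a,b')$ have $b' = n'-a-1$, hence $n'-a-b' = 1$ and $b' \geq 1$ as long as $n' \geq a+2$, and so the second case of Lemma~\ref{nab} continues to apply, producing $D_{m-1} \geq N(m-1; a, (m-1)-a-1)$. This propagates down to $m = a+1$ and yields the second asserted bound.

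The main obstacle, such as it is, will be purely bookkeeping at the transition $m = a+b+1 \to m = a+b$, where one switches from the first to the second case of Lemma~\ref{nab}; once it is verified that the resulting pair $(a, m-a-1)$ continues to satisfy the hypothesis of the second case for every $m$ with $a+1 \leq m \leq a+b$, the proposition follows by induction on the number of hyperplane cuts.
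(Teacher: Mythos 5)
Your proposal is correct and follows essentially the same route as the paper's proof: iterate Theorem~\ref{hyperplane thm} together with Lemma~\ref{nab} down a descending flag of general linear subspaces, using the first case of Lemma~\ref{nab} while the ambient dimension $m$ satisfies $m-a-b\geq 2$ and switching to the second case from $m=a+b+1$ downward (via the identity $N(a+b+1;a,b)=N(a+b+1;a,(a+b+1)-a-1)$), with the degenerate subcases $b=0$ and $n-a-b=1$ handled by the same bookkeeping.
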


\begin{proof}
We will apply Theorem~\ref{hyperplane thm} and Lemma~\ref{nab} repeatedly with $m$ descending from $n-1$.

If $n-a-b\geq 2$, from the first line of Lemma~\ref{nab}, we get that $D_m\geq N(m;a,b)$ for $m=n-1,n-2,\ldots,a+b+1$. The proof is complete here if $b=0$. For $b\geq 1$, when we reach $m=a+b+1$, we have $N(m;a,b)=N(m;a,m-a-1)$. Thus, we deduce from the second line of Lemma~\ref{nab} that for $m=a+b,\dots, a+1$, we always have $D_m\geq N(m;a,m-a-1)$. 

If $n-a-b=1$, the condition for the first inequality never holds and so we just need to prove the second inequality. The argument is exactly the same as that in the previous paragraph by using the second line of Lemma~\ref{nab}. 
\end{proof}

\begin{theorem}\label{gap thm}
Let $f$ be a local orthogonal map from $\mathbb P^{r,s}$ to $\mathbb P^{r',s'}$ and $n:=\dim(\mathbb P^{r,s})$, $n':=\dim(\mathbb P^{r',s'})$. If there exists a non-negative integer $a$ such that
$$(a+1)(n+1)\leq n'\leq (a+2)n-(a^2+2a+2),$$
then the image of $f$ lies in a hyperplane of $\mathbb P^{r',s'}$
\end{theorem}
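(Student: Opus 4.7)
My plan is to argue by contradiction. Suppose $\dim\mathrm{span}(f(U)) = n'$, i.e.\ the image is not contained in any hyperplane of $\mathbb P^{r',s'}$. A short computation with the $n$-th Macaulay representation shows that the hypothesis $(a+1)(n+1)\le n' \le (a+2)n-(a^2+2a+2)$ is equivalent to writing $n' = N(n;a,b)$ for an integer $b$ satisfying
\[
\tfrac{a(a+1)}{2} \;\le\; b \;\le\; n-\tfrac{(a+2)(a+3)}{2}.
\]
This puts us in position to apply Proposition~\ref{dim prop} to $f$ itself: the target $\mathbb P^{r',s'}$ is $\mathbb P^{N(n;a,b)}$, and the image is not contained in a proper subspace by the contrary assumption.

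The heart of the argument is a clever choice of two complementary dimensions. I would take
\[
e' := a+1+\tfrac{a(a+1)}{2}, \qquad e := n-1-e' = n-a-2-\tfrac{a(a+1)}{2},
\]
so that $e+e' = n-1$. A general $e$-plane $E\subset \mathbb P^{r,s}$ then has orthogonal complement $E^\perp$ of dimension $e'$. The upper bound on $b$ is tuned precisely so that $e\ge a+b+1$, which puts $e$ in the first case of Proposition~\ref{dim prop} and yields $\dim\mathrm{span}(f(E\cap U))\ge N(e;a,b)$. Meanwhile the lower bound on $b$ is tuned so that $e'$ sits in the second case of Proposition~\ref{dim prop} (or, when $b=\tfrac{a(a+1)}{2}$, on the boundary of the first), and in either case the bound collapses to $\dim\mathrm{span}(f(E^\perp\cap U)) \ge (a+1)(e'+1)$. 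Adding these and simplifying,
\[
\dim\mathrm{span}(f(E\cap U)) + \dim\mathrm{span}(f(E^\perp\cap U)) \;\ge\; (a+1)(n+1)-\tfrac{a(a+1)}{2}+b \;=\; n',
\]
which flatly contradicts Proposition~\ref{contradict}.

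The main obstacle is purely arithmetic: pinning down a single pair $(e,e')$ that works uniformly for every admissible $b$, and checking that the combinatorics really does bring the sum up to $n'$ exactly at the boundary of the hypothesis. The interval $\mathcal J_k$ in Theorem~\ref{intro thm 2} (with $k=a+1$) is essentially the largest range over which this calibration can be arranged, which explains its shape. A small technical caveat --- that $E$ and $E^\perp$ can be chosen simultaneously generic in the sense of Proposition~\ref{dim prop} and both meeting $U$ --- is immediate from the fact that $E\mapsto E^\perp$ is a homeomorphism of Grassmannians and that meeting the open set $U$ is itself an open condition.
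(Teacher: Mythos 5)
Your proof is correct and follows the same overall strategy as the paper --- argue by contradiction, invoke Proposition~\ref{dim prop} twice on a pair of orthogonally complementary subspaces, and derive a violation of Proposition~\ref{contradict} --- but it differs in the choice of that complementary pair, and the difference is a genuine improvement in cleanliness. The paper takes the balanced split $n_1=\lfloor\frac{n-1}{2}\rfloor$, $n_2=n-1-n_1$, and then must run a two-case analysis according to whether $b\le n_1-a-1$ or $n_1-a\le b$ (and, within Case II, a further sub-split depending on the parity of $n$). You instead fix the $a$-dependent, $b$-independent pair $e'=\tfrac{(a+1)(a+2)}{2}$, $e=n-1-e'$, calibrated so that $e\ge a+b+1$ is exactly the upper bound on $b$, while the lower bound on $b$ is exactly what makes $e'\le a+b+1$; the payoff is that in both sub-regimes the estimate collapses to the single uniform bound $D_{e'}\ge(a+1)(e'+1)$, and the final sum $D_e+D_{e'}\ge n'$ drops out in one line with no case split. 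I checked the arithmetic: $N(e';a,e'-a-1)=(a+2)e'-\tfrac{a(a+1)}{2}=(a+1)(e'+1)$ with your $e'$, and at the boundary $b=\tfrac{a(a+1)}{2}$ one has $N(e';a,b)=(a+1)(e'+1)$ as well, so the two branches really do agree; and $N(e;a,b)+(a+1)(e'+1)=(a+1)(e+e'+2)-\tfrac{a(a+1)}{2}+b=n'$. Your remark on simultaneous genericity of $E$ and $E^\perp$ is the same point the paper implicitly uses with the balanced pair, and your justification (that $E\mapsto E^\perp$ is a homeomorphism of Grassmannians and genericity is an open dense condition) is adequate. In short: same skeleton, different and tighter calibration of the key parameter.
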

\begin{proof}
We will prove by contradiction. Suppose in the inequality in the hypotheses is satisfied for some $a$ and the image of $f$ is not contained in any hyperplane in $\mathbb P^{r',s'}$.  

Since 
$$
(a+1)(n+1)=\binom{n+1}{n}+\cdots+\binom{n-a+1}{n-a}+\dfrac{(a+1)a}{2}
$$ 
and
$$
(a+2)n-(a^2+2a+2)=\binom{n+1}{n}+\cdots+\binom{n-a+1}{n-a}+\left(n-\dfrac{a^2+5a+6}{2}\right),
$$
it follows that
$$
n'=\binom{n+1}{n}+\cdots+\binom{n-a+1}{n-a}+b=N(n;a,b)
$$
for some $b$ satisfying
\begin{equation}\label{ineq 1}
\dfrac{(a+1)a}{2}\leq b\leq n-\dfrac{a^2+5a+6}{2}.
\end{equation}

Let $n_1:=\left[\dfrac{n-1}{2}\right]$ and
$
n_2:=\left\{
\begin{matrix}n_1&\mathrm{ if\,\,} n \mathrm{\,\, is\,\, odd;}\\
n_1+1&\mathrm{ if\,\,} n \mathrm{\,\, is\,\, even.}
\end{matrix}\right.
$
Then $n_1+n_2+1=n$. Since $\langle\cdot,\cdot\rangle_{r,s}$ is non-degenerate, for an $n_1$-dimensional linear subspace in $\mathbb P^{r,s}$, its orthogonal complement is of dimension $n_2$ and conversely,  any $n_2$-dimensional linear subspace is the orthogonal complement of some $n_1$-dimensional linear subspace. Let $D_m$ be the dimension of the linear span of the image under $f$ of a general $m$-dimensional linear subspace intersecting the domain of definition of $f$. We are going to use Proposition~\ref{contradict} to reach a contradiction by showing that $D_{n_1}+D_{n_2}\geq n'$.

By~(\ref{ineq 1}), we have $n\geq a^2+3a+3$, thus
$$
n_1\geq \dfrac{n-2}{2}\geq\dfrac{a^2+3a+1}{2}\geq a+\dfrac{1}{2},
$$
from which we always have $n_1\geq a+1$ since $n_1$ is an integer.
We now consider the following two cases separately:
$$
\mathbf{ Case\,\, I:\,\,\,} b\leq n_1-a-1
\,\,\,\,\,\,\,\,\,\,\mathrm{and}\,\,\,\,\,\,\,\,\,\,
\mathbf{ Case\,\, II:\,\,\,}n_1-a\leq b.
$$

In \textbf{Case I}, since $$n-1\geq n_2\geq n_1\geq a+b+1,$$ so by Proposition~\ref{dim prop} and (\ref{ineq 1}), 
\begin{eqnarray*}
D_{n_1}+D_{n_2}&\geq& N(n_1;a,b)+N(n_2;a,b)\\
&=&\dfrac{a+1}{2}(2n_1+2-a)+b+\dfrac{a+1}{2}(2n_2+2-a)+b\\
&=&(a+1)(n+1-a)+2b\\
&=&(a+1)(n+1-\dfrac{a}{2})+b+\left(b-\dfrac{(a+1)a}{2}\right)\\
&\geq&(a+1)(n+1-\dfrac{a}{2})+b\\
&=&N(n;a,b)=n',
\end{eqnarray*}
which contradicts Proposition~\ref{contradict}.

In \textbf{Case II},  we have $a+1\leq n_1\leq a+b$ and thus by Proposition~\ref{dim prop}, 
$$
D_{n_1}\geq N(n_1;a;n_1-a-1)=(a+2)n_1-\dfrac{a^2+a}{2}
$$
and
$$
D_{n_2}\geq\left\{
\begin{matrix}
N(n_2;a;b) &\mathrm{\,\,if}& n_1=a+b &\textrm{and}& n_2=n_1+1;\\
N(n_2;a;n_2-a-1) &\mathrm{\,\,if}& n_1<a+b&\textrm{or}& n_2=n_1.
\end{matrix}
\right.
$$
Therefore,
{\small
$$
D_{n_1}+D_{n_2}\geq\left\{
\begin{matrix}
(a+2)(n_1+n_2+1)-(a^2+a)+b-n_2-1 &\mathrm{\,\,if}& n_1=a+b &\textrm{and}& n_2=n_1+1;\\
(a+2)(n_1+n_2)-(a^2+a) &\mathrm{\,\,if}& n_1<a+b&\textrm{or}& n_2=n_1,
\end{matrix}
\right.
$$
}
which simplifies to
$$
D_{n_1}+D_{n_2}\geq\left\{
\begin{matrix}
(a+2)n-(a^2+2a+2)&\mathrm{\,\,if}& n_1=a+b &\textrm{and}& n_2=n_1+1;\\
(a+2)n-(a^2+2a+2)&\mathrm{\,\,if}& n_1<a+b&\textrm{or}& n_2=n_1.
\end{matrix}
\right.
$$
Thus, we always have $D_{n_1}+D_{n_2}\geq n'$, which again contradicts Proposition~\ref{contradict}.
\end{proof}

\section{Proper maps between generalized balls}\label{proper map section}

We will now translate the results of the previous section to results for local proper holomorphic maps and formulate precisely the gap phenomenon for all generalized balls.

Let $V$ be a complex vector space equipped with a Hermitian inner product $H_V$ (possibly degenerate or indefinite) of signature $(r;s;t)$, where $\dim(V)=r+s+t$. Let $\mathbb PV$ be its projectivization. Similar to $\mathbb P^{r,s,t}$, the notion of positivity, negativity, nullity and orthogonality can be defined on $\mathbb PV$. In addition, any linear isometry $F:\mathbb C^{r,s,t}\rightarrow V$ induces a biholomorphic map $\tilde F:\mathbb P^{r,s,t}\rightarrow\mathbb PV$ preserving all these notions. Sufficient for our purpose, we  can simply identify any such projective space $\mathbb PV$ with $\mathbb P^{r,s,t}$ through any such biholomorphism and we write $\mathbb PV\cong\mathbb P^{r,s,t}$ for such identification.

Now let $H$ be a complex linear subspace in $\mathbb C^{r,s,t}$ and the restriction of  $\langle\cdot,\cdot\rangle_{r,s,t}$ on $H$ has the signature $(a; b; c)$. Obviously, we have $0\le a\le r$, $0\le b\le s$, $0\le c\le \min\{r-a,s-b\}+t$ and $a+b+c=\dim(H)$.  Then $\mathbb PH \cong \mathbb P^{a,b,c}$. We call $\mathbb P H$ an\textit{ $(a,b,c)$-subspace} of $\mathbb P^{r,s,t}$. We will often denote an $(a,b,c)$-subspace by $H^{a,b,c}$.

We use the following definition for local proper holomorphic maps between generalized balls:

\begin{definition}
A local holomorphic map $f:U\subset\mathbb P^{r,s,t}\rightarrow\mathbb P^{r',s',t'}$, defined on a connected open set $U$ such that $U\cap\partial \mathbb B^{r,s,t}\neq\varnothing$, is called a \textbf{local proper holomorphic map} from $\mathbb B^{r,s,t}$ to $\mathbb B^{r',s',t'}$ if $f(U\cap \mathbb B^{r,s,t})\subset \mathbb B^{r',s',t'}$ and $f(U\cap\partial \mathbb B^{r,s,t})\subset\partial \mathbb B^{r',s',t'}$.
\end{definition}

\begin{proposition}\label{equiv2}
By shrinking the domain of definition if necessary, a local proper holomorphic map from $\mathbb B^{r,s,t}$ to $\mathbb B^{r',s',t'}$ is a local orthogonal map from $\mathbb P^{r,s,t}$ to $\mathbb P^{r',s',t'}$.
\end{proposition}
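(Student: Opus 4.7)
The strategy is the classical polarization argument. I would shrink $U$ to a neighborhood of a generic null point $p_0 \in U \cap \partial\mathbb B^{r,s,t}$ where $f$ admits a holomorphic lift $F : \tilde U \to \mathbb C^{r'+s'+t'}\setminus\{0\}$, with $\tilde U$ a saturated polydisc in $\mathbb C^{r+s+t}\setminus\{0\}$ projecting onto $U$. Put
\[
\psi(z) := \langle z,z\rangle_{r,s,t}, \qquad \phi(z) := \langle F(z),F(z)\rangle_{r',s',t'}.
\]
Properness forces $\phi=0$ on $\{\psi=0\}\cap\tilde U$. Choosing $p_0$ away from the projectivized kernel of $\langle\cdot,\cdot\rangle_{r,s,t}$, which is a proper subset of $\partial\mathbb B^{r,s,t}$, guarantees $d\psi\neq 0$ on $\{\psi=0\}\cap\tilde U$, so the real-analytic division theorem yields a real-analytic function $\mu$ on $\tilde U$ with $\phi=\mu\psi$.

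Now I polarize. Let $g$ and $g'$ denote the holomorphic bilinear forms satisfying $g(z,\bar w) = \langle z,w\rangle_{r,s,t}$ and analogously for the target, and introduce
\[
\Psi(z,u) := g(z,u), \qquad \Phi(z,u) := g'\bigl(F(z),\bar F(u)\bigr), \quad \bar F(u) := \overline{F(\bar u)},
\]
both holomorphic on $\tilde U \times \overline{\tilde U}$ (after further shrinking if needed) and restricting to $\psi,\phi$ along the anti-diagonal $\{u=\bar z\}$. The identity $\phi=\mu\psi$, being real-analytic, complexifies by the identity principle to a holomorphic factorization
\[
\Phi(z,u) = M(z,u)\,\Psi(z,u)
\]
for some holomorphic $M$. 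Evaluating at $(z,u)=(p,\bar q)$ yields $\langle F(p),F(q)\rangle_{r',s',t'} = M(p,\bar q)\,\langle p,q\rangle_{r,s,t}$. Projectivizing, $p\perp q$ implies $f(p)\perp f(q)$, which is exactly the orthogonality from Definition~\ref{orthogonal def}.

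The main technical point is the real-analytic division step producing $\mu$: it relies on $\psi$ cutting out a smooth hypersurface near $p_0$, which is why one must take $p_0$ outside the projectivized kernel of the Hermitian form; since that kernel has positive codimension in $\partial\mathbb B^{r,s,t}$ whenever $r+s\geq 1$, this is achieved by genericity after shrinking $U$. Once the defining function is smooth, the division follows by straightening $\psi$ into a real coordinate and applying Taylor expansion. The complexification and specialization are then routine consequences of the identity principle for real-analytic and holomorphic functions, and the vanishing $F(z) \neq 0$ on $\tilde U$ ensures that the identity descends to the projective setting independently of the choice of lift.
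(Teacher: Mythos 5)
Your argument is the standard polarization/complexification proof that proper maps between hyperquadrics preserve Segre varieties, which is precisely what the paper invokes by pointing to Proposition 2.5 of [GN]; so your route and the paper's coincide in substance, you have merely written out the details that the paper leaves to the reference. One small imprecision: the claim that the projectivized kernel is a proper subset of $\partial\mathbb B^{r,s,t}$ whenever $r+s\geq 1$ is not quite right (for instance, when $s=0$ the set of null points is exactly the kernel, and $d\psi$ vanishes identically on it); the correct condition is $r\geq 1$ and $s\geq 1$, which holds in all cases of actual interest here, and is automatic when $t=0$ where the form is non-degenerate and the kernel is trivial.
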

\begin{proof}
The proof is essentially the same as the one used to prove that such a local proper holomorphic map respects the Segre varieties associated to the boundaries of the generalized balls. Note that in this case a Segre variety is just the orthogonal complement of a single point. (Readers may see~\cite{GN}, Proposition 2.5 therein, for the detail.)
\end{proof}

The reason why it is a bit more complicated to formulate the gap phenomenon for all generalized balls is that by adding ``canceling components'' one can trivially modify any given proper map $f$ between generalized balls to get a new map $f^\sharp$, whose target is a bigger generalized ball. The image of $f^\sharp$ lies in a hyperplane of the target generalized ball, but is not equivalent to $f$ under automorphisms. This is in contrast to the case where the target is an ordinary unit ball. To incorporate this, we will need a couple of definitions:

\begin{definition}[\cite{GN}] In Definition~\ref{orthogonal def}, a local orthogonal map from $\mathbb P^{r,s,t}$ to $\mathbb P^{r',s',t'}$ is called \textbf{null} if its image lies entirely in a null space of $\mathbb P^{r',s',t'}$. 
\end{definition}

\noindent\textbf{Remark.} If a local proper holomorphic map from $\mathbb B^{r,s,t}$ to $\mathbb B^{r',s',t'}$ is null, then it follows that its image is contained in $\partial\mathbb B^{r',s',t'}$. 

For any $(r',s')$-subspace $H^{r',s'}\subset\mathbb P^{R,S}$, by considering the corresponding orthogonal decomposition of $\mathbb C^{R,S}$, it is not difficult to see that there are two canonical projections $\pi$ and $\pi^\perp$ (as rational maps) from $\mathbb P^{R,S}$ to $H^{r',s'}$ and $(H^{r',s'})^\perp$ respectively.

\begin{definition}\label{null prolong}
Let $f$ be a local proper holomorphic map from $\mathbb B^{r,s}$ to $\mathbb B^{R,S}$. If there exists an $(r',s')$-subspace $H^{r',s'}\subset\mathbb P^{R,S}$, such that either 

$(i)$ the image of $f$ is contained in $H^{r',s'}$; or

$(ii)$ $\pi\circ f$ is a local proper holomorphic map from $\mathbb B^{r,s}$ to $\mathbb B^{R,S}\cap H^{r',s'}\cong \mathbb B^{r',s'}$ and $\pi^\perp\circ f$ is null,

then we say that $f$ is a {\it \textbf{null prolongation}} of $\pi\circ f$.
\end{definition}

\noindent\textbf{Example.} If $f=[f_1,\ldots,f_{r'+s'}]$ is a rational proper holomorphic map from $\mathbb B^{r,s}$ to $\mathbb B^{r',s'}$, where each $f_j\in\mathbb C[z_1,\ldots,z_{r+s}]$ is a degree-$d$ homogeneous polynomial, then for any homogeneous $\psi,\phi\in \mathbb C[z_1,\ldots,z_{r+s}]$ with $\deg(\phi)=\deg(\psi)+d$, the map $$F:=[\psi f_1,\ldots,\psi f_{r'},\phi,\psi f_{r'+1},\ldots,\psi f_{r'+s'},\phi],$$ which is locally proper from $\mathbb B^{r,s}$ to $\mathbb B^{r'+1,s'+1}$, is a null prolongation of $f$. 

On the other hand, a similar construction is not possible if we restrict ourselves to unit balls. Indeed, it is easy to see that for maps between unit balls, only $(i)$ can happen in Definition~\ref{null prolong}, i.e. a null prolongation can only be a local proper holomorphic map whose image lies in a smaller dimensional unit ball.

\begin{theorem}\label{gap thm 2}
Let $f$ be a local proper holomorphic map from $\mathbb B^{r,s}$ to $\mathbb B^{R,S}$ and $n:=\dim(\mathbb B^{r,s})$ and $N:=\dim(\mathbb B^{R,S})$. If there exists a positive integer $k$ such that
$$kn+k\leq N\leq (k+1)n-(k^2+1),$$
then there exists an $(r',s')$-subspace $H^{r',s'}\subset\mathbb P^{R,S}$, 
with $\dim(H^{r',s'})=kn+k-1$, such that $f$ is a null prolongation of some local proper holomorphic map from $\mathbb B^{r,s}$ to $\mathbb B^{R,S}\cap H^{r',s'}\cong \mathbb B^{r',s'}$.
\end{theorem}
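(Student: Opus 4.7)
The plan is to apply Theorem~\ref{gap thm} iteratively to trim the ambient projective target one dimension at a time, with each trim recognized as a one-step null prolongation, and then to compose these trims by a transitivity property of null prolongation.

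First, Proposition~\ref{equiv2} lets us regard $f$ as a local orthogonal map from $\mathbb P^{r,s}$ to $\mathbb P^{R,S}$. Substituting $a=k-1$, the hypothesis $kn+k \leq N \leq (k+1)n-(k^2+1)$ becomes exactly $(a+1)(n+1) \leq N \leq (a+2)n-(a^2+2a+2)$, so Theorem~\ref{gap thm} forces the image of $f$ to lie in some hyperplane $L \subset \mathbb P^{R,S}$. Let $\tilde L \subset \mathbb C^{R,S}$ be the underlying codimension-one subspace and write $\tilde L^\perp = \mathrm{span}(\ell)$. Two cases will arise. If $\ell \notin \tilde L$, then the form on $\tilde L$ is non-degenerate and $L$ is an $(R{-}1,S)$- or $(R,S{-}1)$-subspace; taking $H^{r',s'}=L$ realizes case~(i) of Definition~\ref{null prolong}. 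If $\ell \in \tilde L$, then $\ell$ is null and $\tilde L=\ell^\perp$. I would then pick a null $\ell^\ast$ with $\langle \ell,\ell^\ast\rangle=1$ and set $W=\mathrm{span}(\ell,\ell^\ast)^\perp \cong \mathbb C^{R-1,S-1}$, so that $\tilde L = \mathrm{span}(\ell)\oplus W$. Writing $f(p)=\alpha(p)\ell + w(p)$ with $w(p)\in W$, the identity $\langle f(p),f(p)\rangle = \langle w(p),w(p)\rangle$ (valid because $\ell$ is null and $\ell\perp W$) shows that $\pi\circ f:=[w(\cdot)]$ is a local proper map to $\mathbb B^{R-1,S-1}$, while $\pi^\perp\circ f\equiv[\ell]$ is a constant null map. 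This realizes case~(ii) of Definition~\ref{null prolong} with $H^{R-1,S-1}=\mathbb P(W)$.

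In either case, $f$ is a null prolongation of a local proper map to a generalized ball of dimension $N-1$. As long as this new target dimension remains at least $kn+k$, the same reduction applies again with the same $n$ and $k$ (the upper bound $(k+1)n-(k^2+1)$ only becomes slacker when the target dimension decreases), so I iterate until the target dimension is exactly $kn+k-1$.

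The final step, which I expect to be the main technical obstacle, is transitivity of null prolongation: if $f$ is a null prolongation of $g$ via $H_1\subset\mathbb P^{R,S}$ and $g$ is a null prolongation of $h$ via $H_2\subset H_1$, then $f$ is a null prolongation of $h$ via $H_2$, now regarded as an $(r',s')$-subspace of $\mathbb P^{R,S}$. Using the orthogonal decomposition $\mathbb C^{R,S}=\tilde H_2 \oplus M \oplus \tilde H_1^\perp$, where $M$ is the orthogonal complement of $\tilde H_2$ inside $\tilde H_1$, one checks that $\pi_{H_2}\circ f = h$ while $\pi^\perp_{H_2}\circ f$ splits into an $M$-part and a $\tilde H_1^\perp$-part, each landing in a null subspace by hypothesis; their direct sum is again null because the cross pairings between $M$ and $\tilde H_1^\perp$ vanish by construction. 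The mixed combinations where one reduction is case~(i) and the other is case~(ii) of Definition~\ref{null prolong} are simpler and handled analogously, since case~(i) makes one of the complementary projections trivial. Iterating this transitivity along the chain of reductions yields the desired $(r',s')$-subspace $H^{r',s'}\subset\mathbb P^{R,S}$ of dimension $kn+k-1$.
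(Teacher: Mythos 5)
Your proof follows essentially the same route as the paper's: use Proposition~\ref{equiv2} to view $f$ as a local orthogonal map, apply Theorem~\ref{gap thm} with $a=k-1$ to place the image in a hyperplane, recognize one trim as a one-step null prolongation onto an $(r',s')$-subspace, iterate, and finish by transitivity of null prolongation. The paper is more compressed at the trimming step---it simply writes the hyperplane as $H\cong\mathbb P^{r_1,s_1,t_1}$, picks any $(r_1,s_1)$-subspace $H^{r_1,s_1}\subset H$, and observes $(H^{r_1,s_1})^\perp\cap H\cong\mathbb P^{0,0,t_1}$ is null---but your explicit $\ell$, $\ell^\ast$, $W$ construction is just the concrete version of this in the degenerate case, and your transitivity computation is a fleshed-out version of the paper's one-line assertion. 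So I would not call this a genuinely different route; it is the same argument with more scaffolding.

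There is one numerical slip worth flagging. You assert that each trim produces a proper map to a generalized ball of dimension $N-1$, but in your degenerate case $W\cong\mathbb C^{R-1,S-1}$, so $\mathbb P(W)$ has projective dimension $R+S-3=N-2$, not $N-1$. Equivalently, the drop is $1+t_1$, which is $2$ when the hyperplane is tangent to the null cone. Hence the statement ``iterate until the target dimension is exactly $kn+k-1$'' is not automatic: the sequence of dimensions may step over $kn+k-1$ and land at $kn+k-2$. To recover the exact dimension in the conclusion of Theorem~\ref{gap thm 2} one would then need a ``padding-up'' step: enlarge the final $(r',s')$-subspace by a non-null vector orthogonal both to it and to the accumulated null summand, and check that the complementary projection of $f$ remains null. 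That requires the orthogonal complement of the null summand inside $\tilde H_m^\perp$ to contain a non-null vector, which is not entirely obvious. The paper's own proof is equally terse on this point (it just says ``repeat the argument a finite number of times''), so this is a shared imprecision rather than a defect unique to your write-up, but since you explicitly claim a drop of exactly one you should either correct the bookkeeping or supply the padding argument.
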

\begin{proof}[Proof and remarks.]
In Theorem~\ref{gap thm}, by substituting $k:=a+1$, we see that whenever the hypotheses are satisfied, the image of $f$ is contained in a hyperplane $H\subset\mathbb P^{r,s}$. Write $H\cong\mathbb P^{r_1,s_1,t_1}$. Let $H^{r_1,s_1}\subset H$ be any $(r_1,s_1)$-subspace and $\pi:H\dasharrow H^{r_1,s_1}$, $\pi^\perp:H\dasharrow (H^{r_1,s_1})^\perp$ be the canonical projections. (Note that $(H^{r_1,s_1})^\perp\cong\mathbb P^{0,0,t_1}$ is a null space in $\mathbb P^{R,S}$.) Then, it follows that $f$ is a null prolongation of $f_1:=\pi\circ f$, where the latter is locally proper from $\mathbb B^{r,s}$ to $\mathbb B^{R,S}\cap H^{r_1,s_1}\cong\mathbb B^{r_1,s_1}$. The desired result then follows if we repeat the argument for a finite number of times, because if $f_1$ is a null prolongation of another local proper holomorphic map $f_2$ from $\mathbb B^{r,s}$ to $\mathbb B^{r_1,s_1}\cap H^{r_2,s_2}\cong\mathbb B^{r_2,s_2}$ for some $(r_2,s_2)$-subspace $H^{r_2,s_2}$, then $f$ is also a null prolongation of $f_2$.

For proper holomorphic maps from $\mathbb B^{1,n}\cong \mathbb B^n$ to $\mathbb B^{1,N}\cong\mathbb B^N$, Faran's result~\cite{faran} is essentially the statement that the conclusion of the theorem holds for $n+1\leq N\leq 2n-2$. Moreover, the same conclusion has been shown by Huang-Ji-Xu~\cite{hjx} to hold for $2n+1\leq N\leq 3n-4$ and by Huang-Ji-Yin~\cite{HJY} for $3n+1\leq N\leq 4n-7$.

The lower bound $kn+k$ of our gap is actually optimal. This can be seen by considering the expansion of $\left(\sum^k_{j=1}|z_j|^2-\sum^{n+1}_{j=k+1}|z_j|^2\right)\left(\sum^k_{j=1}|z_j|^4\right)$. The expansion is a sum of (plus or minus) norm squares of $kn+k$ linearly independent cubic monomials. Using these monomials as components we get a rational proper map from $\mathbb B^{k,n+1-k}$ to $\mathbb B^{k^2,k(n-k+1)}$ whose image does not lie in any hyperplane. Note that $\dim(\mathbb B^{k,n+1-k})=n$ and $\dim(\mathbb B^{k^2,k(n-k+1)})=kn+k-1$.
\end{proof}

\section{Proof of Lemma~\ref{binom}}\label{last section}

We will prove by induction and first show that the lemma is true for $m=1$ or $k=1$.
Suppose $k=1$ and $A+B=\binom{m+1}{1}-1=m$. If $A=0$ and $B=m$, then $A^{-<m>}+B_{<1>}=0+(m-1)=m-1=\binom{m}{1}-1$. If $1\leq A\leq m$, then
$
A=\binom{m}{m}+\binom{m-1}{m-1}+\cdots+\binom{m-A+1}{m-A+1}.
$
Hence, $A^{-<m>}+B_{<1>}=A+B-1=\binom{m}{1}-1$. 
Suppose $m=1$ and $A+B=\binom{1+k}{k}-1$. Then $B\leq k$ and hence $B_{<k>}=0$. Thus, $A^{-<1>}+B_{<k>}=0+0=\binom{k}{k}-1$. 

Suppose now $A+B=\binom{m+k}{k}-1=\binom{m+k}{m}-1=\binom{m+k-1}{m}+\binom{m+k-1}{k}-1$. Since $A,B$ are integers, we have either $A\geq\binom{m+k-1}{m}$ or $B\geq\binom{m+k-1}{k}$. 


If  $A\geq \binom{m+k-1}{m}$, then the $m$-th Macaulay's representation of $A$ is of the form 
$A=\binom{m+k-1}{m}+\binom{a_{m-1}}{m-1}+\cdots+\binom{a_\delta}{\delta}$. Since $(A-\binom{m+k-1}{m})+B=\binom{m+k-1}{k}-1$, by the induction hypothesis we get
{\footnotesize
\begin{eqnarray*}
\left(A-\binom{m+k-1}{m}\right)^{-<m-1>}+B_{<k>}&=&\binom{m+k-2}{k}-1\\
\Rightarrow\,\,\,\,\, \binom{a_{m-1}-1}{m-2}+\cdots+\binom{a_\delta-1}{\delta-1}+B_{<k>}&=&\binom{m+k-2}{k}-1\\
\Rightarrow\,\,\,\,\, \binom{m+k-2}{m-1}+\binom{a_{m-1}-1}{m-2}+\cdots+\binom{a_\delta-1}{\delta-1}+B_{<k>}&=&\binom{m+k-2}{m-1}+\binom{m+k-2}{k}-1\\
\Rightarrow\,\,\,\,\, A^{-<m>}+B_{<k>}&=&\binom{m+k-1}{k}-1
\end{eqnarray*}
}

If  $B\geq \binom{m+k-1}{k}$, then the $k$-th Macaulay's representation of $B$ is of the form 
$B=\binom{m+k-1}{k}+\binom{b_{k-1}}{k-1}+\cdots+\binom{b_\epsilon}{\epsilon}$. Since $A+(B-\binom{m+k-1}{k})=\binom{m+k-1}{k-1}-1$, by the induction hypothesis we get
{\footnotesize
\begin{eqnarray*}
A^{-<m>}+\left(B-\binom{m+k-1}{k}\right)_{<k-1>}&=&\binom{m+k-2}{k-1}-1\\
\Rightarrow\,\,\,\,\, A^{-<m>}+\binom{b_{k-1}-1}{k-1}+\cdots+\binom{b_\epsilon-1}{\epsilon}&=&\binom{m+k-2}{k-1}-1\\
\Rightarrow\,\,\,\,\, \binom{m+k-2}{k}+A^{-<m>}+\binom{b_{k-1}-1}{k-1}+\cdots+\binom{b_\epsilon-1}{\epsilon}&=&\binom{m+k-2}{k}+\binom{m+k-2}{k-1}-1\\
\Rightarrow\,\,\,\,\, A^{-<m>}+B_{<k>}&=&\binom{m+k-1}{k}-1
\end{eqnarray*}
}

\noindent{\bf Acknowledgements.} The authors would like to thank Prof. Xiaojun Huang, Ming Xiao, Wanke Yin and Yuan Yuan for their comments on the first version. The first author was partially supported by Institute of Marine Equipment, Shanghai Jiao Tong University and
Shanghai Science and Technology Plan projects (No. 21JC1401900). The second author was partially supported by Science and Technology Commission of Shanghai Municipality (STCSM) (No. 13dz2260400).

\end{document}